\documentclass[11pt,a4paper]{article}
\usepackage[T1]{fontenc}
\usepackage[utf8]{inputenc}
\usepackage[english]{babel}
\usepackage{lmodern,microtype}
\usepackage[a4paper,margin=26mm,headheight=14pt]{geometry}
\usepackage{amsmath,amssymb,amsthm,mathtools}
\usepackage{graphicx,float,caption}
\usepackage{enumitem,xcolor,hyperref}
\definecolor{linkblue}{RGB}{25,65,105}
\hypersetup{colorlinks=true,linkcolor=linkblue,citecolor=linkblue,urlcolor=linkblue,
pdftitle={A shape theorem for a radially excited random walk in dimensions d >= 2}}
\setlist{itemsep=3pt,topsep=5pt}
\numberwithin{equation}{section}
\newtheorem{theorem}{Theorem}[section]
\newtheorem{proposition}[theorem]{Proposition}
\newtheorem{lemma}[theorem]{Lemma}

\theoremstyle{definition}

\theoremstyle{remark}

\newcommand{\R}{\mathbb R}
\newcommand{\Z}{\mathbb Z}
\newcommand{\N}{\mathbb N}
\newcommand{\E}{\mathbb E}
\newcommand{\Pp}{\mathbb P}
\newcommand{\F}{\mathcal F}
\newcommand{\one}{\mathbf 1}
\newcommand{\eps}{\varepsilon}
\newcommand{\dd}{\,\mathrm d}
\newcommand{\cE}{\mathcal E}
\newcommand{\bn}{\mathbf n}
\DeclareMathOperator{\vol}{vol}

\title{A shape theorem for a radially excited random walk in dimensions $d\ge2$}
\author{Arvind Singh}
\date{October 7, 2026}
\begin{document}
\maketitle
\begin{abstract}
We consider a once-excited random walk on $\Z^d$, $d\ge2$, where the walk on its first visit has a bias of constant strength \(\beta>0\) toward the origin and moves like a simple symmetric random walk on subsequent visits to that site. We show that the walk is recurrent and prove an almost sure spherical shape theorem: the trace after \(n\) steps is asymptotically a Euclidean ball centered at the origin, with radius proportional to \(n^{1/(d+1)}\), and the local times have an asymptotically conical profile.
This result confirms a conjecture of Kozma (2007) on the shape of the trace for this model.
\end{abstract}

\begin{figure}[H]
\centering
\includegraphics[width=0.45\textwidth]{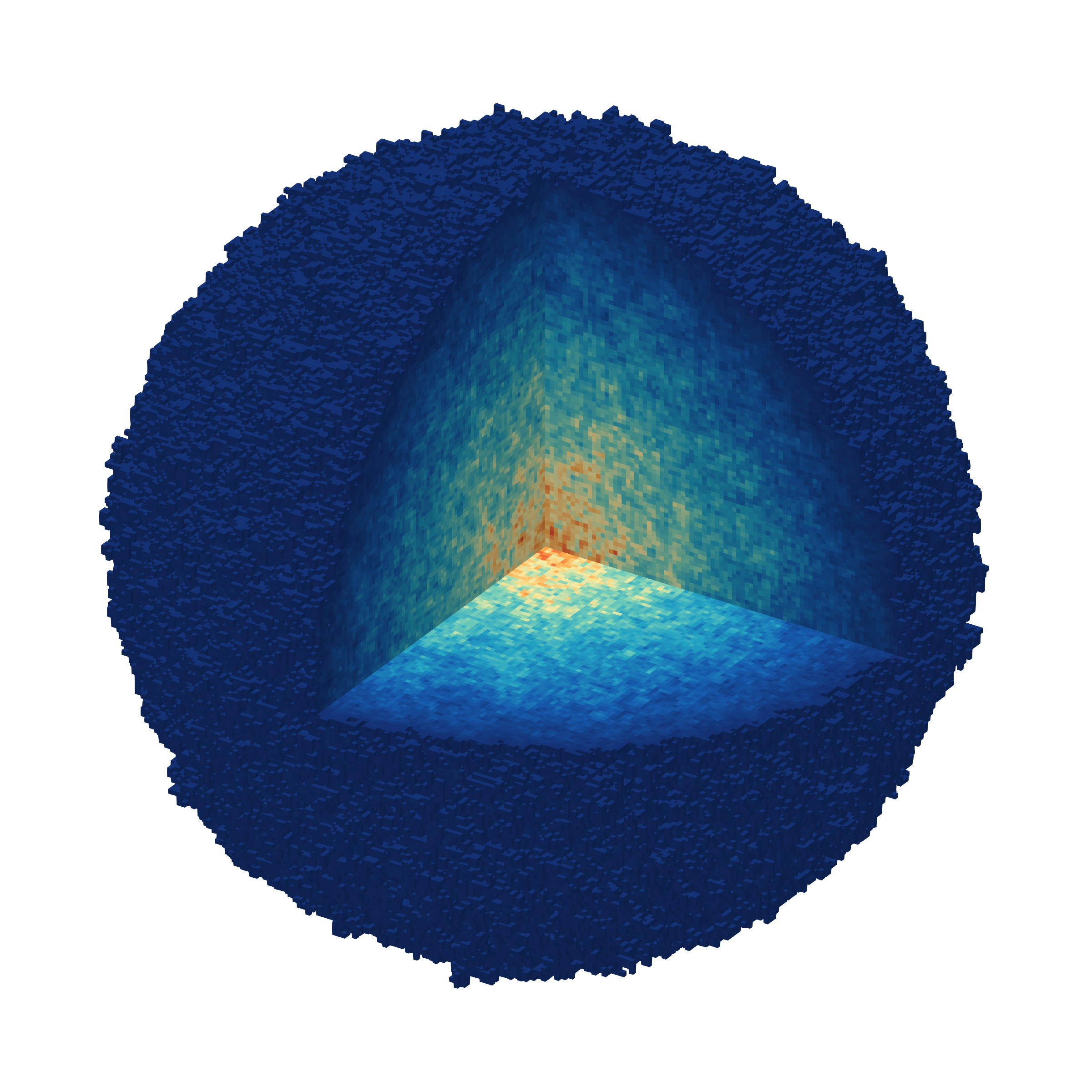}
\captionsetup{font=small}
\caption{Cutaway view of the range of the radially excited random walk in dimension $3$ after $10^8$ steps. Colours range from blue to red as the local time increases.}
\label{fig:range-local-times-d3}
\end{figure}

\section{Introduction}\label{sec:introduction}
The model of the excited random walk was introduced by Benjamini and Wilson \cite{BW} as a simple process with self-interaction: consider a walk on the lattice $\Z^d$, $d\geq 2$, with the modification that the first departure from a site is biased, while all subsequent departures are simple symmetric. In the original model, all the excitations point in the same fixed direction and it was proved that the walk is transient \cite{BW} and exhibits ballistic behaviour \cite{KozmaSpeed3,KozmaSpeed2,BR} in every dimension \(d\ge2\). Following the seminal work of Benjamini and Wilson \cite{BW}, a large literature has developed on more general excited random walks, notably with multiple excitations per site (\emph{a.k.a.} cookie random walks) and excitations in random environments \cite{Zerner2005,BS,KZ2008,MPRV,KP2017,KZSurvey}.

At an open problem session of the 2007 Oberwolfach workshop which the author attended, Kozma discussed a variant in which the first-visit bias depends on the position but always points towards the origin. Kozma later announced that such a walk is always recurrent in every dimension and also conjectured a limiting shape for the visited set, with radius of order \(n^{1/(d+1)}\) \cite{Kozma2007}. He subsequently communicated to the author a sketch of his unpublished recurrence argument (see also \cite{Kozma2013}). His approach is robust as it allows general inward-pointing cookies with varying biases but does not yield a shape theorem. In this paper, we use a different argument based on martingales constructed from Green functions to establish the shape conjecture for an excited random walk with an exactly radial mean drift of constant strength and obtain a uniform conical local-time profile from which recurrence of the walk follows.

Excited random walks belong to the broader class of self-interacting walks, whose non-Markovian dynamics and long memory make them notoriously difficult to study. Another prominent example in this family is the \emph{once-reinforced random walk}, where each edge receives extra weight after its first crossing (but not after subsequent crossings) \cite{Davis}. Its tendency to ``bump'' against the boundary of its trace is reminiscent of the excitation towards the centre considered here, and the two models are expected to behave similarly for strong reinforcements (but the radially excited random walk is more tractable because the inward drift is prescribed, whereas the bias is much more complicated in the reinforced case). Very recent works have brought substantial progress in the study of the once-reinforced random walk: Elboim and Kozma \cite{EK} proved transience for sufficiently weak reinforcement in dimensions $\geq 6$, while Bou-Rabee and Peres \cite{BP}, improving earlier estimates of Collevecchio and Tarr\`es \cite{CT}, obtained lower bounds of order \(n^{d/(d+1)}\) for the range in every dimension $d\ge 2$ and for every reinforcement strength (which is exactly the exponent obtained here for the radially excited random walk). Yet, recurrence in dimension $d=2$ and the conjectured phase transition for $d\geq 3$ from transience to recurrence (with a possible shape theorem) as reinforcement increases remain open.

\paragraph{AI disclaimer.}
The author painfully acknowledges the use of an LLM in the preparation
of this work but takes full responsibility for the mathematical correctness of the content and the quality of the presentation. The main framework of the proof (\emph{i.e.}, to consider Green martingales at each point of the space to simultaneously control the local times process) was the author's own years-old idea, but he could unfortunately never make it work \ldots The statement and proof of Proposition \ref{prop:deterministic-input} concerning the rigidity of the trace, which is the key to the shape theorem, were generated by GPT-6 in a depressingly short span of time while the author was having an enjoyable lunch with his colleagues at Orsay on a sunny day of September 2026.

\paragraph{Addendum.} A couple of days prior to the submission of this manuscript, Nguyen~\cite{Nguyen} proved a shape theorem for the centrally excited random walk, albeit with a different excitation formula that yields an $\ell^1$ ball (instead of the Euclidean $\ell^2 $ ball here) as the limiting shape. Yet, for the radial model considered in this paper, we obtain here a stronger conclusion, with almost sure convergence and an explicit uniform conical local-time profile. Furthermore, the proof presented here is simpler and much shorter than in \cite{Nguyen}, so the author believes the paper has its own merits and is worth presenting to the mathematical community.

\section{Model and main result}\label{sec:model}

We work here on $\Z^d$ with $d\geq 2$. Let \(e_1,\ldots,e_d\) denote the coordinate unit vectors of the space and let $\cE :=\{e_1,-e_1,\ldots,e_d,-e_d\}$. We write \(|x|\) for the Euclidean norm and, for \(x\in\R^d\), we define the radial unit vector
$$
 \bn(x):=\begin{cases}x/|x|,&x\ne0,\\0,&x=0.\end{cases}
$$
The parameter $\beta>0$ represents the strength of the radial drift and is fixed throughout the paper. The excitation (also called a ``cookie'' in the literature) provided at each site \(x\in\Z^d\) is given by a probability distribution $q_x$ on $\cE$ which is here assumed to satisfy the mean drift equality  
\begin{equation}\label{eq:radial-mean}
 \sum_{e\in\cE}q_x(e)e=-\beta\bn(x), \quad x \in \Z^d \setminus \{0\}.
\end{equation}
This means that the expected drift of the distribution is directed towards the origin and has strength $\beta$. For simplicity, we will assume that the distribution $q_0$ at the origin is symmetric. Note that, for $\beta \leq 1/d$, such a family of distributions may be realized by choosing 
\begin{equation}\label{eq:example}
 q_x(\pm e_i)=\frac1{2d}\mp\frac{\beta x_i}{2|x|},\qquad x = (x_1,\ldots,x_d) \neq 0.
\end{equation}
The radially excited random walk is a process $X = (X_n)_{n\geq 0}$ on $\Z^d$ defined by $X_0 = 0$ and the transition rule
\begin{equation}\label{eq:transition}
 \Pp(X_{n+1}=X_n+e \mid \F_n)
 =\begin{cases}
 q_{X_n}(e), & X_n\notin \{ X_i : i < n\}\\
 1/(2d),& \hbox{otherwise},
 \end{cases}
\end{equation}
where $\F_n := \sigma(X_0,\ldots,X_n)$ is the natural filtration of the process. In words, the walk jumps according to $q_x$ on its first visit to a site $x$ and then behaves like a simple symmetric random walk on subsequent visits.

\noindent The main result of this paper concerns the range of the walk at time $n$:
$$
A_n := \{X_0,\ldots,X_{n-1}\},
$$
and the local time at each site $x$ and time $n$:
$$
T_n(x) := \sum_{k=0}^{n-1} \one_{\{X_k=x\}}.
$$

\begin{theorem}[Shape theorem for radially excited random walk]\label{thm:main}
Let \(v_d\) denote the volume of the Euclidean unit ball and define
\begin{equation}\label{eq:constants}
 C_* :=\frac{2d\beta v_d}{d+1} \qquad\hbox{and}\qquad
 r_n :=(n/C_*)^{1/(d+1)}.
\end{equation}
It holds that, almost surely,
\begin{enumerate}[label=(\roman*)]
 \item For every \(\eps\in(0,1)\), 
\begin{equation}\label{eq:main-shape}
 D_{(1-\eps)r_n}\subset A_n\subset D_{(1+\eps)r_n}
 \qquad\hbox{for all $n$ large enough,}
 \end{equation} 
 where $D_r:=\{x\in\Z^d:|x|<r\}$. In particular, the volume and outer radius satisfy
 \begin{equation}\label{eq:main-clock}
 \sharp A_n \sim v_dr_n^d,\qquad
 \max_{x\in A_n}|x|\sim r_n.
 \end{equation}
 \item The local-time process is ``cone-shaped'' and the following uniform approximation holds:
 \begin{equation}\label{eq:main-local}
 \lim_{n\to\infty}\frac1{r_n}
 \sup_{x\in\Z^d}
 \left|T_n(x)-2d\beta(r_n-|x|)_+\right|=0.
 \end{equation}
 In particular, every fixed site is visited infinitely often, so the walk is recurrent and we have
\begin{equation}\label{eq:main-pointwise}
 \frac{T_n(x)}{r_n}\longrightarrow2d\beta
 \qquad\text{for every fixed }x\in\Z^d.
\end{equation}
\end{enumerate}
\end{theorem}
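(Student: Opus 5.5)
The plan is to use Green-function martingales, one for each target site, to obtain a discrete Poisson-type identity linking local times to the range, and then to show that this identity forces the trace to be a ball. For a fixed site $y$, let $a(\cdot)$ be the potential kernel of simple random walk on $\Z^d$ ($d=2$), or minus the Green function ($d\ge3$). Then $\Delta a(\cdot - y)=\delta_y$, where $\Delta$ is the generator $\frac1{2d}\sum_{e\in\cE}(f(\cdot+e)-f)$. Set $M_n(y):=a(X_n-y)-\sum_{k<n}\E[a(X_{k+1}-y)-a(X_k-y)\mid\F_k]$, which is a martingale. The increment of the compensator is $\delta_y(X_k)$ on revisits. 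On first visits it is the same quantity plus the cookie correction $\sum_e (q_{X_k}(e)-\tfrac1{2d})\,a(X_k+e-y)$, which by \eqref{eq:radial-mean} equals $-\beta\,\bn(X_k)\cdot\nabla a(X_k-y)$ up to second-order errors. This gives the identity
\begin{equation*}
a(X_n-y)-a(-y)=T_n(y)-\beta\sum_{x\in A_n}\bn(x)\cdot\nabla a(x-y)+O\Big(\sum_{x\in A_n}|\nabla^2 a(x-y)|\Big)+M_n(y).
\end{equation*}
It is the discrete form of $\Delta T\approx -\beta\,\mathrm{div}$-type source spread over $A_n$. In the continuum, with $u=T/r$ and the range $A=\{u>0\}$, this identity reads $\Delta u=\beta\,\mathrm{div}(\bn\,\one_A)$ in the weak sense, together with $u=0$ off $A$.

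The first step is to control the martingales uniformly. The increments of $M(y)$ are bounded by $|\nabla a|\lesssim |X-y|^{1-d}$ (and by a constant near $y$), so the quadratic variation is at most about $\sum_x T_n(x)|x-y|^{2-2d}$. Using a crude a priori bound $T_n\le n^{\theta}$ and $A_n\subset D_{n}$, Freedman's inequality, a union bound over $y\in D_{n}$, and Borel--Cantelli along a geometric sequence of times, one gets $\sup_y|M_n(y)|=o(r_n)$ almost surely. Interpolation between the geometric times uses monotonicity of $T_n$ and $A_n$. The remaining error terms are $o(r_n)$ as soon as $\sharp A_n=O(r_n^d)$; I would obtain that bound by first summing the identity against a suitable test function.

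The second step is the rigidity statement, which is the deterministic input (Proposition \ref{prop:deterministic-input}). Rescale by $r_n$. Any subsequential limit (in Hausdorff sense for the sets, uniformly for the profiles) of $(A_n/r_n,\,T_n(r_n\cdot)/r_n)$ is a pair $(A,u)$ with $u\ge0$ Lipschitz and $\{u>0\}\subset A\subset\overline{\{u>0\}}$. The pair satisfies $u(y)=\beta\int_A \bn(x)\cdot\nabla G(x-y)\,dx$ up to harmonic terms, together with the mass constraint $\int u=1$, which is the normalization from $\sum_x T_n(x)=n$. The pair must also satisfy the boundary condition $u=0$ on $\partial A$. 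I would take the unique solution to be $A=B(0,\rho)$ and $u=2d\beta(\rho-|x|)_+$ with the factor $2d$ from the generator normalization; this works because $\mathrm{div}\,\bn=(d-1)/|x|$ is consistent with the conical profile. The mass constraint then gives $2d\beta v_d\rho^{d+1}/(d+1)\cdot$(normalization)$=1$, i.e.\ $\rho=1$ with $C_*$ as in \eqref{eq:constants}.

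The main obstacle is proving that uniqueness. The free-boundary condition must be extracted rigorously: points of $\partial A_n$ have zero local time but lie adjacent to the trace. One must also exclude pathological limits such as holes, non-star-shaped sets, and sets carrying mass on lower-dimensional pieces. I would try a comparison or maximum-principle argument. Compare $u$ with cones $2d\beta(\rho-|x|)_+$ for the largest $\rho$ whose ball lies inside $A$ and the smallest $\rho'$ whose ball contains it. At a touching boundary point, the identity forces the radial derivative of $u$ to equal $-2d\beta$, and evaluating the representation formula there yields $\rho=\rho'$. Once that is in hand, \eqref{eq:main-shape} and \eqref{eq:main-local} follow by the subsequence argument, \eqref{eq:main-clock} is immediate, and \eqref{eq:main-pointwise} and recurrence are the special case $x$ fixed of \eqref{eq:main-local}.
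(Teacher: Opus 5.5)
\textbf{There is a genuine gap in the outer bound $A_n\subset D_{(1+\eps)r_n}$.} Your framework matches the paper's: Green-function martingales at every target site, a deterministic rigidity step, and fixing the radius through $\sum_x T_n(x)=n$. But you assert that subsequential limits satisfy $A\subset\overline{\{u>0\}}$, and you implicitly assume that $A_n/r_n$ stays bounded so that Hausdorff limits exist. Nothing in your argument yields either.

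The Green identity determines $T_n$ only up to errors $o(r_n)$. Consider a thin protrusion of the trace: a tentacle of $m=o(r_n^d)$ sites reaching distance $(1+\eps)r_n$. It changes the field $\sum_{x\in A_n}J(x,z)$ by at most $C\sum_{x\in H}(1+|x-z|)^{1-d}\le Cm^{1/d}=o(r_n)$, uniformly in $z$. So a filled ball plus a tentacle carrying local times $o(r_n)$ is fully consistent with the identity. In your continuum limit it becomes a ball plus a lower-dimensional piece, with $u$ the cone. This pair satisfies your representation formula, since Lebesgue measure does not see the piece, and also the boundary condition.

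Hence no deterministic uniqueness or maximum-principle argument can give $\rho=\rho'$. In particular, your claim that the radial derivative of $u$ equals $-2d\beta$ at an outer touching point fails at the tip of such a tentacle, where $u\equiv0$ nearby. The deterministic argument can only show that the excess $A\setminus D_{|z_0|}$ has volume $o(R^d)$. This is what Proposition~\ref{prop:deterministic-input} proves, by evaluating the field at the closest unvisited site $z_0$, where every excess site contributes positively at least $cR^{1-d}$. That pins the cone to the inner radius but says nothing about the diameter of the trace.

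\textbf{The missing ingredient is probabilistic.} The walk cannot travel far through a region where all local times are small. Each first visit gives an inward radial drift $\beta$, and this dominates the second-order terms when sites are visited fewer than $L\ll R$ times. The paper makes this quantitative in Lemma~\ref{lem:annulus}, using optional stopping for $e^{\theta|X_n|}$ minus its compensator, with $\theta=c/L$. It applies this at the exit times $\tau_R$, where the walk has by definition reached radius $R$. If the cone radius were $a_RR\le(1-\eps)R$, the walk would have crossed $\{(1-\eps/2)R<|x|<R\}$ with local times $O(R^{1-\gamma})$, which has probability $O(R^{d+1}e^{-c\eps R^\gamma})$. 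Borel--Cantelli then gives $a_R\to1$, and the sandwich $\tau_{j_n}\le n<\tau_{j_n+1}$ gives the outer bound.

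\textbf{A secondary gap: the a priori bound.} Your bound $T_n\le n^\theta$, which the martingale step needs with $\theta<2/(d+1)$, is not available for free. It must be extracted from the identity itself by a stopping-time bootstrap. The paper does this with $\gamma_R$ in Proposition~\ref{prop:noise}, which gives $T\le CR$ up to the time the martingales exceed $R^{3/4}$.
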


The proof of the theorem uses the following strategy:  we first introduce a family of martingales indexed by $\Z^d$ that link the local time of the walk at each site with the current range of the walk. The fluctuations of the martingales are shown to be negligible yielding an approximate deterministic relation between the local times and the range. A rigidity argument then shows that this relation forces a conical shape for the local-time process (Proposition~\ref{prop:deterministic-input}) but this does not specify the exact radius. An  annulus-crossing estimate is then used to rule out thin protrusions and identify the cone radius with the exit radius (Lemma~\ref{lem:annulus}). These results are first proved for the walk stopped at the exit times from large balls, and then transferred to deterministic times by the usual monotonicity argument.

\medskip

Throughout the paper, $C,c$ denote positive deterministic constants which may change from line to line but depend only on the fixed parameters $d$ and $\beta$, unless indicated otherwise.

\section{Proof of the main result}

\subsection*{Step 1. Relating the range and the local times through martingales}

Let $(S_k)$ denote the simple symmetric nearest-neighbour random walk on $\Z^d$ and let $\mathbf P_x$ denote its law started from $x$. We use the notation $G$ for the Green function of $S$ in dimension $d\ge 3$ and for its (negative) potential kernel in dimension $d=2$:
\begin{equation}
 G(x):= \begin{cases}
      \sum_{k=0}^\infty\mathbf P_0(S_k=x),&d\ge3,\\
      \lim_{N\to\infty}\sum_{k=0}^N
       \bigl[\mathbf P_0(S_k=x)-\mathbf P_0(S_k=0)\bigr],&d=2.
 \end{cases}
\end{equation}
The Green function is bounded in dimension $d\ge 3$ since the simple random walk is transient, and for $d=2$, the potential kernel diverges logarithmically. In particular, it is easy to check (see for instance \cite{LL}) that, in all dimensions $d\geq 2$, we have
\begin{align}
 |G(x)|&\le C\bigl(1+\log(1+|x|)\bigr),
 && x\in\Z^d,\label{eq:green-growth}\\
 |G(x+e)-G(x)|&\le C(1+|x|)^{1-d},
 && x\in\Z^d,\ e\in\cE.\label{eq:green-increment-bound}
\end{align}
The most important (somewhat defining) feature of the Green function is that it is harmonic everywhere except at the origin, where it has a unit defect: 
\begin{equation}\label{eq:green-equation}
 \frac1{2d}\sum_{e\in\cE} G(y+e) =  G(y) - \one_{\{y=0\}}.
\end{equation}
For $z\in \Z^d$, we denote by $G_z(x) := G(x-z)$ the shifted Green function corresponding to the walk $S$ started from $z$.

\medskip

\noindent For each $z\in\Z^d$, we define the process
\begin{equation}\label{eq:martingale-definition}
 M_n(z):=\sum_{k=0}^{n-1}\Big(
 G_z(X_{k+1})-G_z(X_k)
 -\E[G_z(X_{k+1})-G_z(X_k)\mid\F_k]\Big),\quad n\geq 0.
\end{equation}
By construction, $M(z)$ is an $\mathcal{F}_n$-martingale. It is the Doob martingale constructed from the process \(G_z(X_n)\) by compensating for the failure of harmonicity of $G_z$ at $z$ and the additional drift from the first departure from each site. Using the notation 
\begin{equation}\label{eq:cookie-correction}
 J(x,z) :=\sum_{e\in\cE}\left(q_x(e)-\frac1{2d}\right) \bigl[G(x+e-z)-G(x-z)\bigr],
\end{equation}
we can rewrite the previsible part as 
$$
 \E[G_z(X_{k+1})-G_z(X_k)\mid\F_k]=-\one_{\{X_k=z\}}+\one_{\{X_k\notin A_k\}}J(X_k,z).
$$
Thus, telescoping the increments of \(G_z(X_k)\) and counting each cookie correction once, we deduce from \eqref{eq:martingale-definition} the key identity:
 \begin{equation}\label{eq:green-identity}
 T_n(z)=G(-z)-G(X_n-z)+M_n(z)
                  +\sum_{x\in A_n}J(x,z).
\end{equation}

\medskip

\noindent\textbf{Heuristic for the shape theorem.} The identity \eqref{eq:green-identity} above is the founding stone to the proof of the shape theorem. Indeed, $G$ grows at most logarithmically and $M(z)$ is a ``nicely behaved'' martingale with bounded increments, so the contribution of the term $G(-z)-G(X_n-z)+M_n(z)$ in \eqref{eq:green-identity} is negligible compared to the scale of $T_n(z)$.  Therefore, we have, for every $z\in \Z^d$, 
\begin{equation}\label{eq:heuristic}
T_n(z)\approx\sum_{x\in A_n}J(x,z).
\end{equation}
The field $J$ is deterministic and $J(x,z)$ represents the influence of the excitation at site $x$ on visits of the walk to $z$: its strength decays with the distance and its sign is (asymptotically) positive when the excitation at $x$ points towards $z$ and negative when it points away. Relation \eqref{eq:heuristic} states that the local time is essentially determined by the integral of this field $J$ over its trace $A_n$. 

Furthermore, by definition, $A_n$ is exactly the set where the local times are non-zero, but we can also apply \eqref{eq:heuristic} for $z$ outside of the trace to find that 
\begin{equation}\label{eq:heuristic-vanishing}
\sum_{x\in A_n}J(x,z) \approx 0 \qquad \text{for every $z$ on the outer boundary of } A_n.
\end{equation}
The key observation is that this vanishing condition on the boundary is a very strong geometric constraint which, for the particular field $J$ considered here,  forces the trace to be approximately spherical. Then, once $A_n$ is known, \eqref{eq:heuristic} essentially determines the local times and forces the profile of $T_n$ to be asymptotically conical.

\medskip

\noindent\textbf{Recurrence of the walk in dimension $d=2$.} Recurrence of the excited walk in the general case follows from the shape theorem and the description of the local times. However, in the planar case, the martingale $M_n(0)$ can be used directly to provide an elementary proof of recurrence: because the excitation points toward the origin, we have $J(x,0)$ positive, except maybe for a finite number of sites $x$. Furthermore, in dimension $2$, the function $G$ diverges logarithmically, so on the event that the walk is transient, we have $G(X_n)\to-\infty$, forcing $M_n(0)\to-\infty$ (thanks to  \eqref{eq:green-identity}), which is impossible for a martingale with bounded increments.

\subsection*{Step 2. Control of the martingales at the exit times from a ball}

We formalize here the statement made in the heuristic that the martingales $M_n(z)$ are negligible compared to the scale of the local times. For $R>0$, recall that $D_R  := \{x\in\Z^d:|x|< R\}$
and define the exit time from the ball of radius \(R\) by
$$
 \tau_R:=\inf\{n\ge0:|X_n|\ge R\}.
$$
In particular, for $n \leq \tau_R$, we have $A_n\subset D_R$ and $|X_n|\le R+1$. The first result states that, with high probability, all the martingales associated with sites in $D_{2R}$ remain uniformly sublinear in $R$ while the walk remains in the ball $D_{R}$.

\begin{proposition}[Control of the martingales at exit times]\label{prop:noise}
Fix $R\geq 2$. We have $\tau_R<\infty$ a.s. Define the ``good event''
$$
 \mathcal G_R:=
 \left\{\max_{z\in D_{2R}}\max_{0\le n\le\tau_R}|M_n(z)|<R^{3/4}\right\}.
$$
There exist constants $C,C_{\rm time}$ such that
$$
 \Pp(\mathcal G_R^c)\le CR^{-3}.
$$
Furthermore, on $\mathcal G_R$, it holds that
\begin{equation}\label{eq:good-clock}
 \max_x T_{\tau_R}(x)\le CR
 \qquad \hbox{and}\qquad \tau_R\le C_{\rm time}R^{d+1}.
\end{equation}
\end{proposition}

\begin{proof}
The fact that $\tau_R < \infty$ a.s. is obvious since there is a single cookie per site and the process behaves like a simple random walk otherwise, so it cannot remain contained in any bounded region.

We first bound the two terms other than $M_n(z)$ in \eqref{eq:green-identity}. For $z\in D_{2R}$ and $n\le\tau_R$, estimate \eqref{eq:green-growth} gives the crude upper bound
\begin{equation}\label{eq:green-endpoint}
 |G(-z)-G(X_n-z)|\le C\log R
\end{equation}
for some $C>0$. Furthermore, \eqref{eq:green-increment-bound} and the definition of $J$ in \eqref{eq:cookie-correction} imply that $|J(x,z)|\le C(1+|x-z|)^{1-d}$.  Thus, for $z\in D_R$ and $n\le\tau_R$ we find that
$$
 \left|\sum_{x\in A_n}J(x,z)\right|
 \leq C\sum_{x\in D_R}(1+|x-z|)^{1-d}  \leq  C\sum_{x\in D_{2R}}(1+|x|)^{1-d} = O(R).
$$
Combining this bound with \eqref{eq:green-endpoint} in \eqref{eq:green-identity} gives
\begin{equation}\label{eq:pathwise-occupation}
 T_n(z)\le M_n(z)+CR,
 \qquad \hbox{for $z\in D_R$ and $n\le\tau_R$.}
\end{equation}
Now introduce
$$
 \gamma_R:=\inf\left\{n\ge0:\max_{z\in D_{2R}}|M_n(z)|\ge R^{3/4}\right\},
 \qquad \sigma:=\tau_R\wedge\gamma_R.
$$
Equation \eqref{eq:pathwise-occupation} gives
$$
\max_x T_\sigma(x)\le\max_x T_{\sigma-1}(x)+1
 \le R^{3/4}+CR+1\le C'R.
$$
For $z\in D_{2R}$, define the stopped martingale $N_n(z):=M_{n\wedge\sigma}(z)$. By \eqref{eq:green-increment-bound}, its increments satisfy
$$
 |N_{k+1}(z)-N_k(z)|
 \le a_k(z):=C(1+|X_k-z|)^{1-d}\one_{\{k<\sigma\}},
$$
where $a_k(z)$ is $\F_k$-measurable. Grouping the visits by site and using the bound on $T_\sigma$, we find that, for $z \in D_{2R}$, 
$$
 \sum_{k\ge0}a_k(z)^2 \le C\sum_{x\in D_R}T_\sigma(x)(1+|x-z|)^{2-2d}
 \le CR\sum_{y\in D_{3R}}(1+|y|)^{2-2d} \le CR\log R.
$$
Applying the Azuma--Hoeffding inequality with predictable increment bounds \cite[Corollary~1(a)]{HRMS} gives
$$
 \Pp\left(\sup_n|N_n(z)|\ge R^{3/4}\right)
 \le 2\exp\left\{-\frac{R^{3/2}}{CR\log R}\right\}
 \le 2e^{-c\sqrt R/\log R}.
$$
Finally, observe that on $\mathcal G_R^c$, at least one of these stopped martingales must reach the threshold, so a union bound over the $O(R^d)$ sites of $D_{2R}$ gives
$$
 \Pp(\mathcal G_R^c)\le CR^d e^{-c\sqrt R/\log R},
$$
which is $O(R^{-3})$ as claimed. Finally, on $\mathcal G_R$, \eqref{eq:pathwise-occupation} gives $\max_xT_{\tau_R}(x)\le CR$ and 
$$
\tau_R=\sum_{x\in D_R}T_{\tau_R}(x) \le CR \;\sharp D_R \le C_{\rm time} R^{d+1}.
$$
\end{proof}

\subsection*{Step 3. A conical local-time profile from the radial field}\label{sec:interface}

We now prove rigorously that the vanishing condition~\eqref{eq:heuristic-vanishing} on the boundary of the trace forces a conical shape for the local times. We first prove a deterministic rigidity statement and then apply it to the walk. The exact radius of the cone will be identified in Step 4.
We first define
$$
 K(y):=\begin{cases}y/|y|^d,&y\in\R^d\setminus\{0\},\\0,&y=0.\end{cases}
$$
For $R\ge2$ and a finite set $A\subset\Z^d$, we define the associated field
$$
 W_{R,A}(z):=\frac1{d v_dR}\sum_{x\in A}\bn(x)\cdot K(x-z),
$$
where $v_d$ is the volume of the Euclidean unit ball. The following proposition shows that if $W_{R,A}$ is small at some point outside $A$ closest to the origin, then it must be uniformly close to
a conical profile.

\begin{proposition}[Rigidity of the radial field]\label{prop:deterministic-input}
Let $R\ge2$ and fix $A\subset D_R$. Choose $z_0\notin A$ with minimal norm and set $r=|z_0|$. There is a constant $C$ depending only on $d$ such that
\begin{equation}\label{eq:field-rigidity}
 \sup_{z\in\Z^d}
 \left|W_{R,A}(z)-\left(\frac{r-|z|}{R}\right)_+\right|
 \le C\left(|W_{R,A}(z_0)|+R^{-1/d}\right)^{1/d}.
\end{equation}
\end{proposition}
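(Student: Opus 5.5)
The plan is to compare $A$ with the lattice ball $D_r$, to compute the field of $D_r$ explicitly through its continuum analogue, and to control the excess set $B:=A\setminus D_r$ by a positivity argument at $z_0$. Since $z_0\notin A$ has minimal norm, every lattice point of norm $<r$ lies in $A$, so $D_r\subset A$. Writing $A=D_r\sqcup B$ with $B\subset D_R\setminus D_r$, linearity gives
$$W_{R,A}(z)=W_{R,D_r}(z)+W_{R,B}(z).$$

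\emph{Step 1: the ball.} Define the continuum field
$$w_r(z):=\frac1{d v_d}\int_{|x|<r}\bn(x)\cdot K(x-z)\dd x .$$
I claim $w_r(z)=(r-|z|)_+$. Note that $K(x-\cdot)$ is, up to sign, the gradient of the Newtonian kernel, with $\operatorname{div}_x K(x-z)=d v_d\,\delta_z$. Integrating by parts in $x$ therefore turns $w_r$ into a Newtonian potential of $\operatorname{div}\bn=(d-1)/|x|$ on $B_r$, plus a boundary layer on $|x|=r$. The result is radial and continuous. It satisfies $\Delta w_r=-(d-1)/|z|$ inside the ball and is harmonic outside, where it also vanishes at infinity. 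This forces $w_r=(r-|z|)_+$, since $\Delta(r-|z|)=-(d-1)/|z|$; a quicker check is by symmetry and Gauss's law.

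I then compare the lattice sum $\sum_{x\in D_r}$ with the integral. Away from the singularity the summand is Lipschitz with derivative $O(|x-z|^{-d})$, which gives an error $O(\log R)$ after summation. Near $z$ and near the sphere $|x|=r$, the boundary mismatch of $D_r$ (lattice points versus the Euclidean ball) contributes $O(r^{d-1})\cdot|x-z|^{1-d}$-type terms, which sum to $O(R^{1-1/d})$ at worst. After dividing by $R$ this gives
$$\sup_z\bigl|W_{R,D_r}(z)-(r-|z|)_+/R\bigr|\le CR^{-1/d}.$$
In particular $|W_{R,D_r}(z_0)|\le CR^{-1/d}$, because $|z_0|=r$.

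\emph{Step 2: positivity at $z_0$ bounds $|B|$.} For $x\in B$ we have $|x|\ge r=|z_0|$, hence
$$\bn(x)\cdot(x-z_0)=|x|-\bn(x)\cdot z_0\ge |x|-r\ge 0 ,$$
so every term of $W_{R,B}(z_0)$ is nonnegative. I want a quantitative lower bound of order $R^{1-d}$ for each term, which would give $W_{R,B}(z_0)\ge c\,\sharp B\,R^{-d}$. The bound follows from
$$|x|-\bn(x)\cdot z_0\ge |x|-r+ r(1-\cos\theta)\ge c\bigl(|x|-r+|x-z_0|^2/R\bigr),$$
where $\theta$ is the angle between $x$ and $z_0$, together with $|x-z_0|\le 2R$. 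The degenerate case of $x$ very close to $z_0$ needs a separate, easy treatment, since lattice points are at distance at least $1$. Consequently
$$\sharp B\le C R^d\bigl(W_{R,A}(z_0)+CR^{-1/d}\bigr)\le CR^d\bigl(|W_{R,A}(z_0)|+R^{-1/d}\bigr)=:CR^d\eta .$$

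\emph{Step 3: uniform smallness of $W_{R,B}$.} For any $z$, bound $|W_{R,B}(z)|\le \frac{1}{dv_dR}\sum_{x\in B}|x-z|^{1-d}$. By a rearrangement argument, this sum over a set of $m$ lattice points is maximised when the points fill a ball around $z$, which gives at most $Cm^{1/d}$. With $m\le CR^d\eta$, this yields $\sup_z|W_{R,B}(z)|\le C\eta^{1/d}$. Combining with Step 1 and using $R^{-1/d}\le (R^{-1/d})^{1/d}$ (because $R\ge2$) gives \eqref{eq:field-rigidity}.

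I expect the main obstacle to be the careful discretisation in Step 1. One must keep the error at $z$ near the sphere $|x|=r$, and at $z_0$ itself, within $O(R^{-1/d})$, accounting for the singular kernel and the jagged boundary of $D_r$. The per-point lower bound in Step 2 for sites of $B$ nearly tangent to the sphere at $z_0$ is the other delicate point.
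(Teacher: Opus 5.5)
Your proposal is correct and follows essentially the same route as the paper. It uses the decomposition $A=D_r\sqcup B$ and the ball estimate with error $CR^{-1/d}$, obtained from $\int_{|y|<r}\bn(y)\cdot K(y-z)\dd y=d v_d(r-|z|)_+$ plus a discretisation in which the boundary cubes are handled by the $m^{1/d}$ rearrangement bound. It then uses the per-site lower bound $\bn(x)\cdot K(x-z_0)\ge cR^{1-d}$ on $B$ (your law-of-cosines bound is the paper's $x\cdot(x-z_0)\ge|x-z_0|^2/2$), and the uniform $Cm^{1/d}/R$ bound on $W_{R,B}$. No separate near-$z_0$ case is needed, since $|x-z_0|^{2-d}\ge(2R)^{2-d}$.

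The one point to tighten is the continuum identity. In $d\ge3$, radiality, continuity and vanishing at infinity do not fix the exterior term $c|z|^{2-d}$. You need the unit jump of the normal derivative from the boundary layer, so the Gauss-law flux argument you mention (which is exactly what the paper does) is the clean justification.
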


\begin{proof}
Set $H=A\setminus D_{r}$. By the choice of $z_0$, we have $D_{r} \subset A$ so $A=D_{r}\sqcup H$. Let us for the time being admit that, for the ball $D_{r}$, it holds that
\begin{equation}\label{eq:ball-field}
 \sup_{z\in\Z^d}
 \left|W_{R,D_{r}}(z)-\left(\frac{r-|z|}{R}\right)_+\right|
 \le CR^{-1/d}.
\end{equation}
The proof of this estimate follows from a comparison with the continuous setting (where the identity is exact) and is postponed to the end of the proof.

\noindent We first upper bound the number of sites in $H$. For every $x\in H$, we have $|x|\ge|z_0|$; hence
$$
 x\cdot(x-z_0)
 =\frac{|x-z_0|^2+|x|^2-|z_0|^2}{2}
 \ge\frac{|x-z_0|^2}{2},
$$
and, since $|x-z_0|\le2|x|$ and $|x|\le R$, it follows that
$$
 \bn(x)\cdot K(x-z_0)
 \ge\frac{1}{2|x|\,|x-z_0|^{d-2}}
 \ge\frac{1}{(2R)^{d-1}}.
$$
This shows that all sites of $H$ contribute positively to the field at $z_0$, with a uniform lower bound. Consequently, we find that
$$
 W_{R,H}(z_0)\ge c\frac{\sharp H}{R^d}.
$$
At $z_0$, the cone in \eqref{eq:ball-field} vanishes. Hence \eqref{eq:ball-field} and the identity $W_{R,H}=W_{R,A}-W_{R,D_{r}}$ give
\begin{equation}\label{eq:tail-mass}
 \frac{\sharp H}{R^d}
 \le C\left(|W_{R,A}(z_0)|+R^{-1/d}\right).
\end{equation}
We now control the field $W_{R,H}$ uniformly in $z$. We use that $|\bn(x)\cdot K(x-z)|\le|x-z|^{1-d}$ for $x\ne z$ (and the diagonal term vanishes because $K(0)=0$). Let $m:=\sharp H>0$ and split the sum in $W_{R,H}$ between sites at distance at most  $m^{1/d}$ from $z$ and those farther away. For any $z\in\Z^d$, we find that
$$
\begin{aligned}
 d v_d R|W_{R,H}(z)|
 &\le \sum_{\substack{x\in H\\0<|x-z|\le m^{1/d}}}|x-z|^{1-d}
 +\sum_{\substack{x\in H\\|x-z|>m^{1/d}}}|x-z|^{1-d}\\
 &\le \sum_{0<|y|\le m^{1/d}}|y|^{1-d}
 +m\,m^{(1-d)/d}
 \le Cm^{1/d}.
\end{aligned}
$$
Finally, since $W_{R,A}=W_{R,D_{r}}+W_{R,H}$, combining this with \eqref{eq:tail-mass} and \eqref{eq:ball-field} proves \eqref{eq:field-rigidity}.

\medskip

It remains to prove \eqref{eq:ball-field}.
 Fix $z\in\Z^d$ and set $F_z(y)=\bn(y)\cdot K(y-z)$ for $y\in\R^d$. The divergence theorem gives
$$
 \int_{|y|=t}F_z(y)\dd S(y)
 =d v_d\,\one_{\{t > |z|\}},
 \qquad t>0,\quad t\ne|z|,
$$
where $\dd S(y)$ denotes surface measure on the sphere $\{|y|=t\}$. Indeed, $K(\cdot-z)$ has zero divergence away from $z$ and flux $d v_d$ around that point. Integrating over $t$ yields
\begin{equation}\label{eq:continuous-ball}
 \int_{|y|<r}F_z(y)\dd y=d v_d(r-|z|)_+.
\end{equation}
This integral is absolutely convergent because $|F_z(y)|\le|y-z|^{1-d}$. This identity explains the conical profile: the flux through a sphere is constant once its radius exceeds $|z|$.

We now compare this integral with the lattice sum using the unit cubes $Q_x=x+[-1/2,1/2)^d$.
Set aside the cubes meeting the sphere $\{|y|=r\}$ or either of the balls of radius $\sqrt R$ centered at $0$ and $z$.
Since $r\le R+1$ and $d\ge2$, their number, and hence their total volume, is at most $C R^{d-1}$.
Their total contribution to the sum and the integral is therefore at most $CR^{1-1/d}$, by the bound for finite sums above and its integral analogue
$$
 \int_E|y-z|^{1-d}\dd y\le C\vol(E)^{1/d},
$$
obtained by splitting at distance $\vol(E)^{1/d}$ from $z$. Every remaining cube lies entirely inside or outside the ball. On those inside, both singularities are at distance at least $\sqrt R$, so the mean value theorem gives
$$
 |F_z(x)-F_z(y)|\le CR^{-1/2}|x-z|^{1-d},\qquad \hbox{for } y\in Q_x.
$$
Summing over these cubes costs at most $C\sqrt R$, since the bound for finite sums above gives $\sum_{x\in D_r\setminus\{z\}}|x-z|^{1-d}\le CR$.
Consequently, uniformly in $z$,
$$
 \left|\sum_{x\in D_r}F_z(x)-\int_{|y|<r}F_z(y)\dd y\right|
 \le C(R^{1-1/d}+\sqrt R)\le CR^{1-1/d}.
$$
Together with \eqref{eq:continuous-ball}, division by $d v_dR$ proves \eqref{eq:ball-field}.

\end{proof}

\medskip

\noindent\emph{Application to the walk.}
We want to use Proposition \ref{prop:deterministic-input} to prove that the local time process has a conical shape at time $\tau_R$. To do so, we first relate $W_{R,A_{\tau_R}}$ to the local times. The Green asymptotics in \cite[Theorems~4.3.1 and~4.4.4]{LL} give
$$
 G(y+e)-G(y)
 =-\frac2{v_d}K(y)\cdot e+O(|y|^{-d}),
 \qquad y\in\Z^d\setminus\{0\},\ e\in\cE.
$$
Using \eqref{eq:radial-mean} and \eqref{eq:cookie-correction}, we find that
\begin{equation}\label{eq:J-expansion}
 J(x,z)=\frac{2\beta}{v_d}\bn(x)\cdot K(x-z)
             +O(|x-z|^{-d}),\qquad x\ne z.
\end{equation}
Summing the remainders in \eqref{eq:J-expansion} costs at most $C\log R$ for $z\in D_{2R}$, since
$$
 \sum_{x\in D_R\setminus\{z\}}|x-z|^{-d}\le C\log R.
$$
Together with \eqref{eq:green-identity}, \eqref{eq:green-endpoint} and the martingale bound from Proposition~\ref{prop:noise}, we find that
\begin{equation}\label{eq:discrete-approximation}
 \max_{z\in D_{2R}}
 \left|\frac{T_{\tau_R}(z)}{2d\beta R}- W_{R,A_{\tau_R}}(z)\right|
 \le CR^{-1/4} \quad\text{on }\mathcal G_R.
\end{equation}
This is the quantitative version of \eqref{eq:heuristic}.

Let us now choose $z_0\notin A_{\tau_R}$ with minimal norm and set
$$
a_R:=|z_0|/R.
$$
Thus, $R a_R$ is the radius of the largest filled open ball centered  at the origin. Because $z_0$ is outside the range, we have $T_{\tau_R}(z_0)=0$; hence \eqref{eq:discrete-approximation} gives $|W_{R,A_{\tau_R}}(z_0)|\le CR^{-1/4}$. Then, applying Proposition~\ref{prop:deterministic-input} to the set $A_{\tau_R}$ and using \eqref{eq:discrete-approximation} again yields
\begin{equation}\label{eq:cone-power}
 \sup_{z\in\Z^d}
 \left|\frac{T_{\tau_R}(z)}R
       -2d\beta\left(a_R-\frac{|z|}{R}\right)_+\right|
 \le CR^{-\gamma}\quad\text{on }\mathcal G_R,
\end{equation}
with $\gamma=\min\{1/(4d),1/d^2\}$. Note that we can run the supremum over all $\Z^d$ because both the local time and the cone vanish outside $D_{2R}$.

Thus, we have now proved that, with high probability, the local time profile of the walk at time $\tau_R$ of the exit of $D_R$ has a conical shape with radius (possibly random) $R a_R$. 

\subsection*{Step 4. Identification of the exact radius}\label{sec:annulus}

We now show that the asymptotic radius of the cone is actually $R$, \emph{i.e.} $a_R$ converges to $1$. The argument is probabilistic: if this were not the case, then the walk would have to cross an annulus of width $(1-a_R)R$ with small local time but this can be excluded using (again) a martingale argument.

\begin{lemma}[Crossing an annulus with small local times]\label{lem:annulus}
There exist constants $c,C>0$ such that, for all $1/2<\eta<1$, $R\ge2$ and positive integers $L,M$, we have
\begin{equation}\label{eq:annulus-bound}
 \Pp\left(\tau_R\le M,\ \sup_{|x|>\eta R}T_{\tau_R}(x)<L\right)
 \le CM\exp\left\{-\frac{c(1-\eta)R}{L}\right\}.
\end{equation}
\end{lemma}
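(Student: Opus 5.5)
The plan is to run an exponential supermartingale in the radial coordinate, restricted to the annulus $\{\eta R<|x|<R\}$, and to use the hypothesis $T_{\tau_R}(x)<L$ to guarantee that cookies are eaten at a rate at least $1/L$ per step spent there. First I dispose of the trivial regime. If $L\ge c_0R$ for a small constant $c_0$, then $\exp\{-c(1-\eta)R/L\}\ge e^{-c/c_0}$, and \eqref{eq:annulus-bound} holds after enlarging $C$. So I assume $L\le c_0R$ from now on.

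\emph{Step 1 (one-step estimates).} Set $\phi(x):=(|x|-\eta R)_+$ and $\lambda:=c_1/L$. A Taylor expansion of $|x+e|$ for $|x|\ge\eta R\ge R/2$ gives
\[
\sum_e p(e)\,\bigl(|x+e|-|x|\bigr)=\sum_e p(e)\,e\cdot\bn(x)+O(1/R)
\]
for any step distribution $p$. By \eqref{eq:radial-mean}, this equals $-\beta+O(1/R)$ on a cookie step and $O(1/R)$ on a symmetric step. Since $|\Delta\phi|\le1$, I get
\[
\E\bigl[e^{\lambda\Delta\phi}\mid\F_k\bigr]\le\exp\bigl\{-\lambda\beta+C\lambda^2+C\lambda/R\bigr\}
\]
on a cookie step in the annulus, and
\[
\E\bigl[e^{\lambda\Delta\phi}\mid\F_k\bigr]\le\exp\bigl\{C\lambda^2+C\lambda/R\bigr\}
\]
otherwise. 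Write $S_n$ for the number of steps $k<n$ with $X_k$ in the annulus, and $\mathrm{Ck}_n$ for the number of those steps that are first visits. Then
\[
V_n:=\exp\bigl\{\lambda\phi(X_n)-\lambda\beta\,\mathrm{Ck}_n-(C\lambda^2+C\lambda/R)S_n\bigr\}
\]
is a supermartingale along annulus steps.

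\emph{Step 2 (counting cookies).} On the event in \eqref{eq:annulus-bound}, every annulus site visited before $\tau_R$ carries fewer than $L$ visits and contributed exactly one cookie. Hence $\mathrm{Ck}\ge S/L$. With $\lambda=c_1/L$, $c_1$ small and $L\le c_0R$, the cookie gain $\lambda\beta\,\mathrm{Ck}$ then dominates the error term $(C\lambda^2+C\lambda/R)S$. Consequently the exponent of $V_{\tau_R}$ is at least $\lambda\bigl((1-\eta)R-1\bigr)$. Optional stopping then yields a bound of order $e^{-c(1-\eta)R/L}$, once the starting point of the supermartingale is controlled.

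\emph{Step 3 (the entrance steps, the expected obstacle).} Steps from $D_{\eta R}$ into the annulus raise $\phi$ by at most $1$ but are not compensated, and there may be many of them. I would therefore start a fresh copy $V^{(k)}$ at each time $k<M$ at which the walk enters the annulus, and use the copy at the last entry $k$ before $\tau_R$. Along $(k,\tau_R]$ the walk stays in the annulus and $\phi$ rises from at most $1$ to at least $(1-\eta)R$. A union bound over $k\le M$ produces the factor $M$.

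The delicate point is that the cookie count must be restricted to the window $(k,\tau_R]$, and sites visited before time $k$ yield no cookies there. My first attempt would be to keep $\mathrm{Ck}$ and $S$ counted globally from time $0$, so that $\mathrm{Ck}\ge S/L$ still holds, while $\phi$ is measured relative to $\phi(X_k)$. Before $k$, the annulus steps are grouped into completed excursions, each of which returns $\phi$ to at most $1$. One then needs to check that the accumulated factor $\exp\{\lambda\beta\,\mathrm{Ck}_k-(\cdots)S_k\}$ from those earlier excursions is itself controlled, in expectation, by the same supermartingale inequality applied excursion by excursion. If that bookkeeping fails, the fallback is to bound $S$ on the window directly, via $S\le L\cdot\sharp\{\text{sites}\}$ and a crude count of the old sites.
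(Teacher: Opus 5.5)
\textbf{Verdict: there is a genuine gap.} Your Step 3 is the real difficulty of the lemma, and it is left unresolved; neither of your two suggestions closes it.

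With global counts and the last entry time $k$, optional stopping on the window gives
\[
\Pp(\cdots)\le e^{-\lambda((1-\eta)R-1)}\,\E\Bigl[\exp\bigl\{\lambda\beta\,\mathrm{Ck}_k-(C\lambda^2+C\lambda/R)S_k\bigr\}\Bigr].
\]
So every cookie eaten during earlier excursions is credited to the final crossing and must be paid for. Applying the supermartingale inequality excursion by excursion only bounds each excursion's factor, in conditional expectation, by $e^{\lambda\phi(\text{entry point})}\le e^{\lambda}$. This is the uncompensated entrance step again. You therefore get $e^{\lambda N}$, where $N$ is the number of earlier excursions; $N$ is a priori only bounded by $M$ and is in general far larger than $(1-\eta)R$, so this is useless against the gain $e^{\lambda(1-\eta)R}$.

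The fallback fails as well. In the window, a visit to an annulus site already visited before $k$ earns no cookie but costs $C\lambda^2+C\lambda/R$ in the exponent. With up to $L$ visits, that is up to $(Cc_1+CL/R)\lambda$ per old site. A crude count of old sites gives the volume of the annulus, which is much larger than $(1-\eta)R$. The obstruction is structural: multiplicative compensation compounds over the many entrances, while cutting time into windows destroys the site-by-site bookkeeping behind $\mathrm{Ck}\ge S/L$.

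\textbf{The missing idea} is to argue additively and globally in time, as the paper does. Take $f(x)=e^{\theta|x|}$ with $\theta=c/L$, and the martingale $f(X_n)-\sum_{k<n}\mu_k$, where $\mu_k=\E[f(X_{k+1})-f(X_k)\mid\F_k]$. Stop it at $\sigma=\tau_R\wedge M\wedge\inf\{n:\sup_{|x|>\eta R}T_n(x)\ge L\}$.
\begin{itemize}
\item For $|X_k|>\eta R$, Taylor's formula gives $\mu_k\le\theta f(X_k)\bigl[C_0(R^{-1}+\theta)-\beta\one_{\{X_k\notin A_k\}}\bigr]$. Because $f(X_k)=f(x)$ on every visit to $x$, the visits to an annulus site $x$ contribute in total at most $\theta f(x)\bigl[C_0(R^{-1}+\theta)L-\beta\bigr]\le 0$.
\item Each step from $D_{\eta R}$ is bounded crudely by $\mu_k\le e^{\theta(\eta R+1)}$.
\item Hence $\E f(X_\sigma)\le 1+Me^{\theta(\eta R+1)}$, and Markov's inequality at level $e^{\theta R}$ yields the bound with exactly the linear factor $M$.
\end{itemize}
Entrances now cost additively, at a level exponentially below the target, instead of multiplying.

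\textbf{Two smaller points.}
\begin{itemize}
\item The cookie term in $V_n$ must enter as $+\lambda\beta\,\mathrm{Ck}_n$ for Step 2 to make sense.
\item The truncation in $\phi=(|x|-\eta R)_+$ invalidates your Step 1 drift bound at cookie sites within distance $1$ of the sphere $|x|=\eta R$. Inward steps are truncated there, and the drift of $\phi$ can even be positive. Working with $|x|$ itself avoids this.
\end{itemize}
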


\begin{proof}
Choose $c>0$ sufficiently small (whose exact value will be specified later). It suffices to consider $L\le cR$, since otherwise the bound is trivial after increasing $C$. In particular, we may assume $R\ge4$.
Set $\theta:=c/L$ and define $f(x):=e^{\theta|x|}$. Just as in Step 1, we consider again a Doob martingale but associated this time with $f(X_n)$ and previsible increments 
$$
 \mu_k:=\E[f(X_{k+1})-f(X_k)\mid\F_k].
$$
We want to control $\mu_k$. To this end, assume $|x|>R/2$ and $e\in\cE$; then Taylor's formula gives
$$
f(x+e)-f(x) \le\theta f(x)\left[\bn(x)\cdot e+C_0\left(R^{-1}+\theta\right)\right],
$$
where the constant $C_0$ depends only on $d$, and the bound is uniform in $R,\theta,x,e$. By assumption \eqref{eq:radial-mean}, the radial component of the next step has conditional mean $-\beta$ at the first departure from $x$ and zero afterward. Thus, taking the conditional expectations yields
$$
 \mu_k\le\theta f(X_k)\left[C_0\left(R^{-1}+\theta\right)
 -\beta\one_{\{X_k\notin A_k\}}\right]\quad\hbox{when $|X_k|>\eta R$.}
$$
Consider now the stopping time
$$
 \sigma:=\tau_R\wedge
 \inf\left\{n:\sup_{|x|>\eta R}T_n(x)\ge L\right\} \wedge M.
$$
Now, each site contributes exactly one cookie. Thus, using that $T_\sigma(x)\le L$, we find that, for any site $x$ with $|x|>\eta R$ visited, its contribution is bounded by 
$$
 \begin{aligned}
 \sum_{\substack{k<\sigma\\X_k=x}}\mu_k
 &\le\theta f(x)\left[C_0\left(R^{-1}+\theta\right)T_\sigma(x)-\beta\right]\\
 &\le\theta f(x)\left[C_0\left(R^{-1}+\theta\right)L - \beta\right]
 \le\theta f(x)(2C_0c-\beta) \le 0,
 \end{aligned}
$$
where we used $L/R\le c$ and $L\theta=c$.  The last inequality holds as soon as $c\le\min\{1/4,\beta/(2C_0)\}$ (which depends only on $d$ and $\beta$) so we can now fix $c$ equal to this minimum.

For the time indices such that $|X_k| \leq \eta R$, we just use the crude bound $\mu_k\le e^{\theta(\eta R+1)}$.
Putting things together and applying the optional stopping theorem to the martingale $f(X_n)-\sum_{k<n}\mu_k$ with the (bounded by $M$) stopping time $\sigma$, we find that
$$
 \E(f(X_\sigma))=1+\E\big(\sum_{k<\sigma}\mu_k\big)
 \le1+M e^{\theta(\eta R+1)}.
$$
On the event in \eqref{eq:annulus-bound}, we have $\sigma=\tau_R$ and $f(X_\sigma)\ge e^{\theta R}$. Consequently,
$$
 \Pp\left(\tau_R\le M,\ \sup_{|x|>\eta R}T_{\tau_R}(x)<L\right) \le e^{-\theta R}\bigl(1+M e^{\theta(\eta R+1)}\bigr) \le CM e^{-c(1-\eta)R/L},
$$
as claimed.
\end{proof}

We can now identify the radius of the cone. Let $\eps\in(0,1)$ and put $\eta=1-\eps/2$. On the event $\mathcal G_R\cap\{a_R\le1-\eps\}$, the conical approximation \eqref{eq:cone-power} implies
$$
 \sup_{|z|>\eta R}T_{\tau_R}(z)\le C_{\rm cone}R^{1-\gamma},
$$
while Proposition~\ref{prop:noise} gives $\tau_R\le C_{\rm time}R^{d+1}$.
Apply Lemma~\ref{lem:annulus} with
$$
 L_R:=\left\lceil C_{\rm cone}R^{1-\gamma}\right\rceil+1,
 \qquad M_R:=\left\lceil C_{\rm time}R^{d+1}\right\rceil.
$$
For all sufficiently large $R$, we obtain
$$
 \Pp(a_R\le1-\eps) \leq \Pp(\mathcal{G}_R^c) + \Pp\left(\tau_R\le M_R,\ \sup_{|x|>\eta R}T_{\tau_R}(x)<L_R\right)
 \le CR^{-3}+CR^{d+1}e^{-c\eps R^\gamma},
$$
where $c,C>0$ depend only on $d$ and $\beta$. The right side is summable over integer $R$. Therefore, the 
Borel--Cantelli lemma ensures that $\liminf_{R\to\infty} a_R\ge1-\eps$ a.s. and therefore $\lim_{R\to\infty} a_R = 1$ a.s. The events $\mathcal G_R$ also hold eventually almost surely thanks to the Borel--Cantelli lemma. Finally, using  that $(a-u)_+$ is Lipschitz in $a$, we conclude from \eqref{eq:cone-power} that 
\begin{equation}\label{eq:exit-profile-final}
 \lim_{R\to\infty}\frac{1}{R}
 \sup_{z\in\Z^d}
 \left|T_{\tau_R}(z)-2d\beta(R-|z|)_+\right|=0
 \qquad\text{a.s.}
\end{equation}
We point out that the limit above is \emph{a priori} valid along integer $R$ but trivially extends to all real values by monotonicity of the exit times and local times w.r.t. $R$.

\subsection*{Step 5. From exit times to deterministic times}\label{sec:completion}

\begin{proof}[Completion of the proof of Theorem~\ref{thm:main}]
We first identify the exact rate of growth of $\tau_R$. By definition, $\tau_R$ is the sum of the local times over all sites in $D_R$, which has $O(R^d)$ sites, so using \eqref{eq:exit-profile-final} gives, a.s. 
$$
 \tau_R=\sum_{z\in D_R} T_{\tau_R}(z) = 2d \beta\sum_z(R-|z|)_++o(R^{d+1}).
$$
A Riemann-sum approximation yields
$$
 \frac1{R^{d+1}}\sum_z(R-|z|)_+
 \longrightarrow\int_{\R^d}(1-|y|)_+\dd y
 =d v_d\int_0^1(1-r)r^{d-1}\dd r
 =\frac{v_d}{d+1}.
$$
Therefore, we conclude that
\begin{equation}\label{eq:clock-final}
 \lim_{R\to\infty} \frac{\tau_R}{R^{d+1}} =  \frac{2d\beta v_d}{d+1} =: C_* \quad\hbox{a.s.}
\end{equation}
We now transfer the result from hitting times to deterministic times using a standard ``sandwich argument'' based on monotonicity of the local times. For $n\ge1$, define $j_n := \sup(j\in \N :\, \tau_{j}\le n) $. We have
\begin{equation}\label{eq:time-squeeze}
 \tau_{j_n}\le n<\tau_{j_n+1}.
\end{equation}
On  the one hand, using \eqref{eq:clock-final} and recalling that $r_n :=(n/C_*)^{1/(d+1)}$, it follows that 
$$
\frac{j_n}{r_n}\longrightarrow 1.
$$
On the other hand, monotonicity of local times combined with \eqref{eq:time-squeeze} implies, simultaneously for every $z$, that
$$
 T_{\tau_{j_n}}(z)\le T_n(z)\le T_{\tau_{j_n+1}}(z).
$$
Since $j_n/r_n\to 1$ and $r\mapsto(r-|z|)_+$ is Lipschitz, \eqref{eq:exit-profile-final} shows that both the lower and upper bounds are equal to $2d\beta(r_n-|z|)_++o(r_n)$, uniformly in $z$. The same therefore holds for $T_n(z)$, proving 

\begin{equation}\label{eq:deterministic-profile}
\lim_{n\to\infty}\frac1{r_n}
\sup_{z\in\Z^d}\left|T_n(z)-2d\beta(r_n-|z|)_+\right|=0
\qquad\text{a.s.},
\end{equation}
which is exactly statement \eqref{eq:main-local} of Theorem \ref{thm:main}. In particular, for a fixed site $z$, this gives 
$$
 \frac{T_n(z)}{r_n}
 \longrightarrow2d\beta>0,
$$
which is the limit stated in \eqref{eq:main-pointwise} of Theorem \ref{thm:main} and implies the recurrence of the walk.

We now turn to the range $A_n$. Fix $\eps\in(0,1)$. The approximation \eqref{eq:deterministic-profile} just established gives
$$
 \min_{z\in D_{(1-\eps)r_n}}T_n(z)\ge2d\beta\eps r_n-o(r_n)>0
$$
for all sufficiently large $n$. Thus $D_{(1-\eps)r_n}\subset A_n$. On the other hand, $n<\tau_{j_n+1}$ implies $A_n\subset D_{j_n+1} \subset D_{(1+\eps)r_n}$ for $n$ large since $(j_n+1)/r_n\to 1$. Thus, we have proved that 
$$
D_{(1-\eps)r_n}\subset A_n \subset D_{(1+\eps)r_n} \quad \hbox{for all $n$ large enough a.s.}
$$
Finally, the asymptotics $\sharp A_n \sim v_dr_n^d$ and $\max_{x\in A_n}|x|\sim r_n$ follow trivially from this shape theorem  and the estimate $\sharp D_r\sim v_dr^d$ so the proof of the theorem is complete.
\end{proof}

\end{document}